\numberwithin{equation}{section}
\numberwithin{figure}{section}
\theoremstyle{plain}
\newtheorem{thm}{\protect\theoremname}
  \theoremstyle{definition}
  \newtheorem{problem}[thm]{\protect\problemname}
  \providecommand{\problemname}{Problem}
\providecommand{\theoremname}{Theorem}
\begin{document}

\title{A generalization of Schur's theorem and its application to consecutive
power residues}

\author{Carsten Dietzel}

\email{mat72129@stud.uni-stuttgart.de}
\begin{abstract}
This article provides a proof of a generalization of Schur's theorem
on the partition regularity of the equation $x+y=z$, which involves
a divisibility condition. This generalization will be utilized to
prove the existence of 'small' consecutive power residues modulo $p$,
where $p$ is a sufficiently large prime.
\end{abstract}
\maketitle

\section{A theorem of Brauer}

In \cite{key-1}, a theorem of Brauer is discussed. It says that,
given positive integers $k,m$, for each sufficiently large prime
$p$ there exist consecutive integers $r,r+1,...,r+m-1$, each of
which is a $k$th power residue modulo $p$, i.e. a $k$th power in
$\mathbb{Z}_{p}:=\mathbb{Z}/p\mathbb{Z}$. I will require a power
residue $r$ to fulfill the condition $0<r<p$, avoiding the trivial
cases $r=0,p$ for $k=2$.

Thus, for all but finitely many exceptional primes there exists a
minimal integer $r>0$ such that the condition of $r,r+1,...,r+m-1$
all being $k$th power residues holds. Define $r(k,m,p)$ to be that
integer.

Furthermore, define the function $\Lambda(k,m)$ to be the maximum
of $r(k,m,p)$ over all non-exceptional primes and set $\Lambda(k,m)=\infty$
if there is no such integer.

The authors of \cite{key-1} then ask if $\Lambda(k,2)$ is finite
for all $k$.

For $k=1$, this is a quite boring triviality and for $k=2$, an easy
exercise in elementary number theory.

Moreover, explicit values for $\Lambda(k,2)$, where $k=3,...,6$,
are given in \cite{key-1} 

The question was answered by Hildebrand in \cite{key-5,key-4} in
the affirmative:
\begin{thm}
$\Lambda(k,2)$ is finite for each $k$.
\end{thm}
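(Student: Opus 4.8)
The plan is to distil the combinatorial content into a Schur-type theorem carrying an extra divisibility clause, and then to feed it the colouring of $\{1,\dots,N\}$ in which an integer is coloured by the coset of the group of $k$th powers modulo $p$ to which its class belongs.

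\emph{The combinatorial step.} I would first prove: for every $d\ge 1$ there is an $N=N(d)$ such that every colouring of $\{1,\dots,N\}$ with $d$ colours admits $x,y$ with $x\mid y$, $x+y\le N$, and $x$, $y$, $x+y$ all of the same colour. The argument is the Ramsey-theoretic proof of ordinary Schur, run on a host set chosen so that the divisibility becomes automatic. Build $0=s_0<s_1<\dots<s_n$ recursively: having fixed $s_0,\dots,s_{m-1}$, put $s_m=s_{m-1}+D_m$, where $D_m$ is a positive common multiple of all the differences $s_b-s_a$ with $0\le a<b\le m-1$ (and $D_1=1$). Then $s_j-s_i\mid s_k-s_j$ for all $i<j<k$, because $s_k-s_j=D_{j+1}+\dots+D_k$ and each $D_\ell$ with $\ell>j$ is, by construction, a multiple of $s_j-s_i$. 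Now colour the edge $\{s_i,s_j\}$ (for $i<j$) of the complete graph on $\{s_0,\dots,s_n\}$ by the colour of $s_j-s_i$; if $n+1$ is at least the $d$-colour Ramsey number for triangles, a monochromatic triangle $i<j<k$ appears, and then $x=s_j-s_i$, $y=s_k-s_j$, $z=s_k-s_i=x+y$ do the job, with $N:=s_n$ (a finite, if enormous, function of $d$).

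\emph{The number-theoretic step.} Fix $k$ and set $N=N(k)$; since a colouring with $d\le k$ colours is in particular a colouring with $k$ colours, this single $N$ works for any number of colours up to $k$. Let $p>N$ be prime, let $H\le(\mathbb{Z}/p\mathbb{Z})^{\ast}$ be the subgroup of $k$th powers, and note that its index is $\gcd(k,p-1)=:d\le k$. Colour $\{1,\dots,N\}$ by the coset of $H$ containing the class of the integer, which is meaningful since $p>N$. The combinatorial step produces $x,y,z=x+y$ in $\{1,\dots,N\}$ lying in a common coset $aH$, with $x\mid y$ (hence $x\mid z$). As integers, $r:=y/x$ and $r+1=z/x$ are then well defined and lie in $\{1,\dots,N\}$; modulo $p$ they equal $\overline{y}\,\overline{x}^{-1}$ and $\overline{z}\,\overline{x}^{-1}$, both of which lie in $H$. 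So $r$ and $r+1$ are $k$th power residues modulo $p$, and since $0<r<r+1\le N<p$ they are consecutive $k$th power residues below $N$; thus $r(k,2,p)\le N$.

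Consequently every prime $p>N$ is non-exceptional with $r(k,2,p)\le N$, and for the finitely many primes $p\le N$ one trivially has $r(k,2,p)<p\le N$ whenever it is defined; hence $\Lambda(k,2)\le N<\infty$. The one place a genuine idea is required is the combinatorial step: one must arrange the host set $S$ so that \emph{every} difference triple produced by Ramsey's theorem already satisfies $x\mid y$, and choosing each gap $D_m$ to be a common multiple of all earlier differences is exactly what forces this. Once that is in hand, the deduction is bookkeeping.
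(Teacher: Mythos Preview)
Your proof is correct and follows the same approach as the paper: prove a Schur-type theorem with the extra divisibility clause by running the Ramsey argument on a carefully built host set whose differences automatically satisfy $x\mid y$, then apply it to the coset colouring modulo $p$. Your recursive construction $s_m=s_{m-1}+D_m$ with $D_m$ a common multiple of all earlier differences is a mild variant of the paper's choice $a_{n+1}=(\sum_{i\le n}a_i)!$ (indeed, the paper explicitly mentions the product-of-differences alternative as a refinement), and the deduction of $\Lambda(k,2)\le N$ is identical in substance.
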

But the original proof is quite complicated and depends on non-trivial
estimates of Dirichlet series.

The aim of this article is to provide a short combinatorial proof
of Theorem 1, offering a more elementary approach than the original
proof does.

Before proving Theorem 1, I'll prove Brauer's theorem for the special
case $m=2$, relying on a famous theorem of Schur on the partition
regularity of the equation $x+y=z$:
\begin{thm}
Let $\mathbb{Z}^{+}=C_{1}\uplus C_{2}\uplus...\uplus C_{l}$ be a
finite partition of the set of positive integers, then there exists
an $i\in\left\{ 1,2,...,l\right\} $ and $x,y,z\in C_{i}$ such that
$x+y=z$.

Moreover, there is a number $S(l)$ such that $x,y,z$ can be chosen
to be $\leq S(l)$.\end{thm}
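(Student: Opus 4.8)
The plan is to deduce Schur's theorem from the finite version of Ramsey's theorem for edge-colourings of complete graphs. Recall that for each $l$ there should be a least integer $R_{l}$ (the $l$-colour Ramsey number for triangles) with the property that every colouring of the edges of the complete graph on $R_{l}$ vertices using $l$ colours contains a monochromatic triangle; I will then take $S(l):=R_{l}-1$. So the first step is to record a proof that $R_{l}$ is finite, which I would do by induction on $l$: for $l=1$ take $R_{1}=3$, and for the inductive step fix a vertex $v$ in a complete graph on $l(R_{l-1}-1)+2$ vertices. Then $v$ has $l(R_{l-1}-1)+1$ neighbours, so by pigeonhole at least $R_{l-1}$ of them are joined to $v$ by edges of one fixed colour $c$; either two of these neighbours are joined by an edge of colour $c$, yielding a monochromatic triangle through $v$, or the edges among them use only the remaining $l-1$ colours, so the induction hypothesis applies. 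This yields the recursion $R_{l}\le l(R_{l-1}-1)+2$ and hence a crude explicit bound such as $R_{l}\le 3\cdot l!$, which is all that is needed.

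The second step is the translation. Given the partition $\mathbb{Z}^{+}=C_{1}\uplus\cdots\uplus C_{l}$, set $N=R_{l}$ and consider the complete graph on the vertex set $\{1,2,\ldots,N\}$. Colour the edge $\{a,b\}$ with $a<b$ by the unique index $i$ for which $b-a\in C_{i}$; this is a well-defined $l$-colouring, since $1\le b-a\le N-1$ always lies in exactly one of the classes. By the choice of $N$ there is a monochromatic triangle, i.e. indices $a<b<c$ and a colour $i$ with $b-a$, $c-b$ and $c-a$ all lying in $C_{i}$.

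The third step is immediate: put $x=b-a$, $y=c-b$, $z=c-a$. Then $x,y,z\in C_{i}$ and $x+y=(b-a)+(c-b)=c-a=z$, so $x+y=z$ has a monochromatic solution; moreover $x,y,z\le c-a\le N-1=S(l)$, which also gives the quantitative claim.

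The only genuine work is the first step, the finiteness of the multicolour Ramsey number, and even there the argument is the standard neighbourhood-pigeonhole induction, so I expect no serious obstacle; the passage from a monochromatic triangle to a monochromatic Schur triple in the second and third steps is essentially formal once the colouring ``by differences'' has been set up. If one prefers to keep the exposition self-contained, the Ramsey induction can alternatively be unwound directly into an iterated pigeonhole argument on nested subsets of $\{1,\ldots,N\}$, producing the same bound without naming Ramsey's theorem.
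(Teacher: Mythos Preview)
Your proof is correct and is precisely the standard Ramsey-theoretic argument (colour edges of $K_{R_l}$ by differences, extract a monochromatic triangle) that the paper simply cites from \cite{key-2} rather than reproducing; indeed the paper reuses exactly this template in its proof of Theorem~4. There is nothing to add---the approaches coincide.
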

\begin{proof}
See \cite{key-2}, p. 69.
\end{proof}
This already suffices to give a proof of Brauer's theorem for $m=2$.
This proof is nearly identical to Schur's original application of
Theorem 2 which he used to prove the solvability of the Fermat equation
in $\mathbb{Z}_{p}$ for large $p$. A proof can be found in \cite{key-2},
pp. 69-70 and will serve as a guideline for the proof of the following
theorem.
\begin{thm}
Let $k$ be fixed. Then, for each sufficiently large prime $p$ there
exist two consecutive $k$-th power residues modulo p.\end{thm}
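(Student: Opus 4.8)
The plan is to imitate Schur's classical deduction of the solvability of the Fermat congruence. Fix a large prime $p$ and let $H\le\mathbb{Z}_{p}^{*}$ be the subgroup of nonzero $k$-th power residues. Since $\mathbb{Z}_{p}^{*}$ is cyclic of order $p-1$, the index $[\mathbb{Z}_{p}^{*}:H]=\gcd(k,p-1)$ divides $k$; call it $d$, so $d\le k$. The cosets of $H$ therefore partition $\{1,2,\dots,p-1\}$ into $d$ classes, and the key observation is that this is a \emph{finite} coloring to which Theorem 2 can be applied.

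Concretely, I would assume $p>S(k)$; since Schur numbers are monotone, $S(d)\le S(k)<p$, so the integers $1,2,\dots,S(d)$ are pairwise distinct nonzero residues modulo $p$ and each lies in exactly one coset of $H$. Color $n\in\{1,\dots,S(d)\}$ by the coset containing it, and extend this to a coloring of all of $\mathbb{Z}^{+}$ with at most $d$ colors in any way. Theorem 2 then produces $x,y,z$ of the same color with $x+y=z$ and $x,y,z\le S(d)$; in particular $x,y,z$ are nonzero residues lying in one common coset $gH$. Working in $\mathbb{Z}_{p}^{*}$ and dividing the relation by $y$ gives $xy^{-1}+1=zy^{-1}$, and since $x,y,z\in gH$ both $xy^{-1}$ and $zy^{-1}$ lie in $H$, hence are $k$-th power residues differing by $1$. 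Letting $r\in\{1,\dots,p-1\}$ represent $xy^{-1}$, we have $r+1\equiv zy^{-1}\not\equiv 0\pmod p$, so $r\ne p-1$; thus $0<r<r+1<p$ and $r,r+1$ are two consecutive $k$-th power residues modulo $p$.

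The structural content is essentially immediate once one notices that cosets of the $k$-th powers give a coloring of $\{1,\dots,p-1\}$ with a bounded number of colors; the only points requiring care are the bookkeeping around the residue class $0$ — choosing $p$ larger than the Schur bound $S(k)$ so that the initial segment $\{1,\dots,S(d)\}$ faithfully records cosets, and verifying that neither $r$ nor $r+1$ degenerates to $0$ — together with the (routine) passage from the bounded form of Theorem 2 to a coloring of an initial segment, which is handled simply by extending the coloring to $\mathbb{Z}^{+}$ using no new colors. I expect no genuine obstacle beyond these technical details.
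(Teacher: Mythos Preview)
Your proof is correct and follows essentially the same route as the paper: color an initial segment of the integers by cosets of the $k$-th power subgroup, apply Schur's theorem (Theorem~2) to obtain a monochromatic Schur triple, and divide through by one of its terms to land in $H$. The only differences are cosmetic---you divide by $y$ rather than $x$, explicitly invoke monotonicity of Schur numbers via $d=\gcd(k,p-1)$, and check the boundary case $r+1<p$---none of which alters the underlying argument.
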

\begin{proof}
Let $H\subseteq\mathbb{Z}_{p}^{\star}$ be the multiplicative subgroup
of $k$-th powers (except $0$) in $\mathbb{Z}_{p}^{\star}$. It's
clear that $[\mathbb{Z}_{p}^{\star}:H]\leq k$.

Now assume that $p>S(k)$ and let $H_{1},H_{2},...,H_{l}$ be the
cosets of $H$ in $\mathbb{Z}_{p}^{\star}$. The previous statement
says that $l\leq k$. The coset partition induces a partition of the
residue classes $\left\{ 1,2,...,S(k)\right\} $, so, by Theorem 2,
there is a class $H_{i}$ in which the equation $x+y=z$ can be solved.

Dividing the equation by $x$ leads to $yx^{-1}+1=zx^{-1}$. $y'=yx^{-1}$
and $z':=zx^{-1}$ must then be in $H$, and thus are consecutive
$k$th powers in $\mathbb{Z}_{p}$ fulfilling $y'+1=z'$.
\end{proof}
The idea of this proof will reappear when proving the finiteness of
$\Lambda(k,2)$. But in its 'classical' form, Theorem 2 doesn't allow
us to control the range of the residue classes of $y'$ and $z'$
which we got by dividing $y$ and $z$ by $x$ modulo $p$.

This disadvantage will be overcome by using a strengthening of Schur's
theorem which might be interesting on its own.

\section{A generalization of Schur's theorem}

By $R_{3}(k)$ we will mean the Ramsey number $R(\underset{k\, times}{\underbrace{3,3,...,3})}$,
i.e. the minimal number $R$ such that each edge partition of the
complete graph $K_{R}$ into $k$ parts has a triangle, all of whose
edges belong to the same partition class. $R_{3}(k)$, of course,
is finite, by Ramsey's theorem (see, for example, \cite{key-2}).

The proof of Theorem 4 will involve Ramsey's theorem in nearly the
same way as in the 'classical' proof of Schur's theorem. The construction
of the exactly right set of vertices will be crucial hereafter.
\begin{thm}
Let $\mathbb{Z}^{+}=C_{1}\uplus C_{2}\uplus...\uplus C_{l}$ be a
finite partition of the set of positive integers, then there exists
an $i\in\left\{ 1,2,...,l\right\} $ and $x,y,z\in C_{i}$ such that
$x+y=z$ and $x$ divides $y$.

Moreover, there is a number $S'(l)$ such that $x,y,z$ can be chosen
to be $\leq S'(l)$.\end{thm}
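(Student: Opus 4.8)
The plan is to imitate the classical Ramsey-theoretic proof of Schur's theorem, but to choose the vertex set of the complete graph so cleverly that the monochromatic triangle it produces yields not just a solution of $x+y=z$ but one in which $x \mid y$. In the usual argument one takes vertices $1, 2, \ldots, N$ and colours the edge $\{a,b\}$ (with $a<b$) by the class $C_i$ containing $b-a$; a monochromatic triangle $a<b<c$ then gives $x=b-a$, $y=c-b$, $z=c-a$ in one class with $x+y=z$. To force $x \mid y$, I would instead take as vertices a \emph{geometrically structured} set, for instance the powers $\{ M, M^2, M^3, \ldots, M^N \}$ of a suitably large integer $M$, or more flexibly numbers of the form built up so that for vertices $u<v<w$ the difference $v-u$ always divides $w-v$. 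The key algebraic observation is that if the vertices are $v_j = a(1 + t + t^2 + \cdots + t^{j-1})$ for a fixed integer $t \geq 1$ (a scaled geometric-type progression), then for $j_1 < j_2 < j_3$ one has $v_{j_2} - v_{j_1} = a t^{j_1}(1 + \cdots + t^{j_2 - j_1 - 1})$ and $v_{j_3} - v_{j_2} = a t^{j_2}(1 + \cdots + t^{j_3 - j_2 - 1})$, and since $j_2 \ge j_1 + 1$ the factor $t^{j_1}$ appears in both while $t^{j_2}$ carries an extra factor $t^{j_2 - j_1} \ge t$; choosing $t$ divisible by a large enough number (or simply arranging the index gaps) makes the first difference divide the second.

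Concretely, here is the cleanest version I would write up. Fix $N = R_3(l)$. Consider the $N$ integers $w_j = 2^{j} \cdot (2^{N} + 1)$ for $j = 1, \ldots, N$ — or, even more simply, work with vertices $v_0 < v_1 < \cdots < v_{N-1}$ where $v_j - v_i$ for $i<j$ is always of the form $2^i \cdot (\text{odd})$ in such a way that the $2$-adic valuation is strictly increasing along any increasing triple. Colour the edge $\{i,j\}$ of $K_N$ ($i<j$) by the class of $\mathbb{Z}^+$ containing $v_j - v_i$. By Ramsey's theorem there is a monochromatic triangle $i<j<k$; set $x = v_j - v_i$, $y = v_k - v_j$, $z = v_k - v_i$. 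Then $x,y,z$ lie in the same class $C_m$, $x+y=z$ automatically, and the construction guarantees $v_2(x) < v_2(y)$ together with $x \mid y$ — this last divisibility is exactly what the geometric spacing is engineered to deliver. Finally, since all $v_j$ are bounded by an explicit function of $N = R_3(l)$, the three numbers $x,y,z$ are bounded by some $S'(l)$ depending only on $l$, which is the quantitative claim.

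The main obstacle — and the place where "the construction of the exactly right set of vertices will be crucial" — is reconciling two competing demands on the vertex set. We need (a) that $v_j - v_i$ divides $v_k - v_j$ for \emph{every} increasing triple $i<j<k$, not just for consecutive indices, and (b) that the whole set still fits inside a bounded range so the final bound $S'(l)$ exists. Pure powers $v_j = t^j$ satisfy a divisibility of the shape $v_j - v_i = t^i(t^{j-i}-1)$, and one checks $t^i(t^{j-i}-1) \mid t^j(t^{k-j}-1)$ iff $(t^{j-i}-1) \mid t^{j-i}(t^{k-j}-1)$, i.e. iff $(j-i) \mid (k-j)$ in the exponent lattice when $t-1$ is handled correctly — so one cannot use \emph{all} powers but must pass to a sub-progression of indices forming a chain under divisibility, e.g. indices $1, N!, (N!)\cdot 2, \ldots$ or the "super-divisible" sequence $a_1 = 1$, $a_{r+1} = a_r \cdot (r+1)$ padded appropriately. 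Getting this bookkeeping right, so that the Ramsey colouring is applied to a genuine $K_{R_3(l)}$ whose vertices have mutually nested difference-structure, is the technical heart of the argument; once it is set up, invoking Theorem~2's Ramsey input and reading off $x+y=z$ with $x\mid y$ is routine.
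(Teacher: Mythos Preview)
Your overall plan is exactly the paper's: colour the edges of $K_{R_3(l)}$ by the partition class of suitable ``differences'' so that a monochromatic triangle automatically yields $x+y=z$, and arrange the vertex set so that in addition $x\mid y$ for every increasing triple. You also correctly isolate the crux --- finding a vertex set whose pairwise differences are universally nested under divisibility. What is missing is an actual construction that works; none of the candidates you write down does.

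For $w_j=2^{j}(2^{N}+1)$ one has $w_j-w_i=2^{i}(2^{j-i}-1)(2^{N}+1)$, and for $i<j<k$ the requirement $x\mid y$ becomes $(2^{j-i}-1)\mid(2^{k-j}-1)$, i.e.\ $(j-i)\mid(k-j)$; this fails already for $(i,j,k)=(1,3,4)$. More generally, the assertion that ``$v_2(x)<v_2(y)$ together with $x\mid y$'' follows from the construction conflates a necessary condition with a sufficient one: $v_2(6)<v_2(8)$ but $6\nmid 8$. Your final paragraph recognises the problem for pure powers and proposes passing to indices in a ``super-divisible'' sequence such as $a_r=r!$, but the needed relation is $(e_s-e_r)\mid(e_t-e_s)$ for \emph{all} $r<s<t$, and e.g.\ $3!-1!=5$ does not divide $4!-3!=18$. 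So the proposal ends precisely where the real work begins.

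The paper closes this gap with a sequence tailored to make every partial sum divide every later term: set $a_1=1$ and $a_{n+1}=\bigl(\sum_{m=1}^{n}a_m\bigr)!$, take as ``vertices'' the indices $1,\dots,R_3(l)$, and assign to the edge $\{i,j\}$ (with $i<j$) the colour of $\sum_{m=i}^{j-1}a_m$. Since $\sum_{m=i}^{j-1}a_m\le\sum_{m=1}^{n-1}a_m$ whenever $n\ge j$, that partial sum is literally one of the factors in $a_n=\bigl(\sum_{m=1}^{n-1}a_m\bigr)!$, hence divides every later $a_n$ and therefore divides $\sum_{n=j}^{k-1}a_n$. A monochromatic triangle $i<j<k$ then gives $x=\sum_{m=i}^{j-1}a_m$, $y=\sum_{n=j}^{k-1}a_n$, $z=x+y$ in one class with $x\mid y$, and $S'(l)=\sum_{m=1}^{R_3(l)-1}a_m$ serves as the bound. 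Your sketch would become a complete proof once this (or an equivalent) sequence is supplied and verified.
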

\begin{proof}
Firstly, construct an increasing sequence of positive integers recursively
as follows:

\begin{eqnarray*}
a_{1} & = & 1\\
a_{n+1} & = & \left(\sum_{i=1}^{n}a_{i}\right)!
\end{eqnarray*}

I know claim that for each triple $1\leq i<j<k$ there holds the following
divisibility relation:

\begin{eqnarray*}
\left(\sum_{m=i}^{j-1}a_{m}\right) & | & \left(\sum_{n=j}^{k-1}a_{n}\right)
\end{eqnarray*}

Obviously, it's enough to prove that

\begin{eqnarray*}
\left(\sum_{m=i}^{j-1}a_{m}\right) & | & a_{n}
\end{eqnarray*}

for $n\geq j$.

Because of $\sum_{m=i}^{j-1}a_{m}\leq\sum_{m=1}^{n-1}a_{m}$ and $a_{n}=\left(\sum_{m=1}^{n-1}a_{m}\right)!$,
the element $\sum_{m=i}^{j-1}a_{m}$ must be a factor in the product
expansion of $\left(\sum_{m=1}^{n-1}a_{m}\right)!$, so the divisibility
relation is proved.

Now let $R:=R_{3}(k)$ and label the vertices of the complete graph
$K_{R}$ by the numbers $1,2,...,R$.

Define an edge partition of this $K_{R}$ into classes $D_{1},D_{2},...,D_{l}$
as follows:

For $i<j$, let the edge $(i,j)$ belong to the class $D_{m}$ iff
$\left(\sum_{n=i}^{j-1}a_{n}\right)\in C_{m}$.

By Ramsey's theorem, a triangle $\left\{ i,j,k\right\} $ must exist,
all of whose edges belonging to the same class, say, $D_{M}$. Without
loss of generality, assume $i<j<k$ and set:

\begin{eqnarray*}
x & = & \sum_{n=i}^{j-1}a_{n}\\
y & = & \sum_{n=j}^{k-1}a_{n}\\
z & = & \sum_{n=i}^{k-1}a_{n}
\end{eqnarray*}

Clearly, $x+y=z$ and $x|y$, as we have just proved.
\end{proof}
If we tried to estimate the corresponding Schur numbers, the proof
of this theorem would give us quite astronomical upper bounds on $S'(l)$.

If one is a little more careful one might construct a sequence $b_{n}$
by setting $b_{1}=1$ and defining the following $b_{n}$ recursively
by

\begin{eqnarray*}
b_{n+1} & = & \prod_{1\leq i<j\leq n}\left(\sum_{k=i}^{j-1}b_{m}\right)
\end{eqnarray*}

and using these like the $a_{n}$ in the proof of Theorem 4. This
leads to better upper bounds, which are nevertheless certainly far
beyond the actual range of the numbers $S'(l)$.

One might also try to generalize Theorem 4 further in the style of
Theorem 3.1.2 in \cite{key-2}, in the following sense:
\begin{problem}
Is the following true:

Let $m$ be a positive integer and $\mathbb{Z}^{+}=C_{1}\uplus C_{2}\uplus...\uplus C_{l}$
be a finite partition of the set of positive integers, then there
exists an $i\in\left\{ 1,2,...,l\right\} $ and integers $x,y$ such
that $x,y,y+x,...,y+(m-1)x\in C_{i}$ and $x|y$.
\end{problem}
Without the divisibility condition, this is just Theorem 3.1.2. in
\cite{key-2} and imitating the proof of Theorem 3 immediately leads
to a full proof of Brauer's theorem mentioned at the beginning of
this article.

Why such a generalization cannot be true, though, will be explained
after the proof of Theorem 1 in the next section.

\section{Finiteness of $\Lambda(k,2)$}

Now we have all we need for proving Theorem 1:
\begin{proof}
Define $H\subseteq\mathbb{Z}_{p}^{\star}$ as in the proof of Theorem
3, but now take $p>S'(k)$ whose existence is given by Theorem 4.

The cosets $H_{1},H_{2},...,H_{l}$ now partition the set of residue
classes $\left\{ 1,2,...,S'(k)\right\} $. Again, we find a class
$H_{i}$ and $x,y,z\in H_{i}$ such that $x+y=z$ and $x|y$ respectively
$x|z$. This implies that $yx^{-1},zx^{-1}\in\left\{ 1,2,...,S'(k)\right\} $,
so, if we set $y'=yx^{-1}$, $z'=zx^{-1}$, we again get a pair $y',z'$
of consecutive $k$th power residues with the extra property that
$y',z'$ are bounded from above by $S'(k)$.

We have just proved that $\Lambda(k,2)\leq S'(k)$ and especially
that $\Lambda(k,2)$ is finite.
\end{proof}
This proof also shows why Problem 5 has to answered in the negative.
Else, even for $m=3$, the statement would imply in the same way as
in the proof of Theorem 1 the existence of a bound for the first three
consecutive $k$th power residues modulo $p$, which is independent
of $p$. But if $k=2$ is set, $\Lambda(2,3)$ would be finite which
contradicts the results in \cite{key-3} where $\Lambda(2,3)=\infty$
is shown.

At last, I will show that Theorem 4 also implies the following theorem
proved in \cite{key-4}.
\begin{thm}
For every $k$ there is a number $c_{0}=c_{0}(k)$, such that for
every multiplicative function $f:\mathbb{Z}^{+}\to\mathbb{C}$ (i.e.
$f(mn)=f(m)f(n)$) whose image is contained in the $k$'th roots of
unity, there is an $a\leq c_{0}$ with $f(a)=f(a+1)=1$.\end{thm}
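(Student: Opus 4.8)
The plan is to mimic the proof of Theorem~1 (finiteness of $\Lambda(k,2)$), replacing the coset partition of $\mathbb{Z}_p^\star$ by the level sets of $f$. Since $f$ is multiplicative and takes values in the group $\mu_k$ of $k$-th roots of unity, the fibers of $f$ partition $\mathbb{Z}^+$ into at most $|\mu_k| = k$ classes $C_1,\dots,C_l$ with $l \le k$. The key point is that this is a \emph{multiplicative} colouring: if $f(x) = f(y)$ then $f(xy^{-1}) = 1$ in the sense that $f$ is constant on the ``ratio'', and more precisely, for positive integers with $x \mid y$ we have $f(y) = f(x)f(y/x)$, so $f(y/x) = f(y)/f(x)$.

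First I would set $c_0 = S'(k)$, the bound from Theorem~4 applied to $l \le k$ colour classes, and apply Theorem~4 to the partition of $\mathbb{Z}^+$ by the fibers of $f$. This yields a class $C_i$ and $x,y,z \le S'(k)$ with $x + y = z$, $x \mid y$, and $f(x) = f(y) = f(z)$. Dividing by $x$ (legitimately, since $x \mid y$ and $x \mid z$, the latter because $z = x + y$ and $x \mid x, x\mid y$), set $a := y/x$ and note $a + 1 = z/x$, both positive integers with $a + 1 \le z \le S'(k) =: c_0$. Then $f(a) = f(y/x) = f(y)/f(x) = 1$ and $f(a+1) = f(z/x) = f(z)/f(x) = 1$, using $f(x) = f(y) = f(z)$ and multiplicativity $f(y) = f(x)f(a)$, $f(z) = f(x)f(a+1)$. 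This is exactly the desired pair.

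The only subtlety to check carefully — and the step I expect to be the main (minor) obstacle — is the division step: multiplicativity as stated, $f(mn) = f(m)f(n)$, is assumed for \emph{all} pairs $m,n$, not just coprime ones, so from $y = x\cdot a$ we directly get $f(y) = f(x)f(a)$, and since $f(x)$ is a $k$-th root of unity (hence invertible) and $f(x) = f(y)$, we conclude $f(a) = 1$; likewise for $a+1$. (If one only had the weaker ``coprime'' form of multiplicativity this argument would break, so it is worth noting that the hypothesis here is the strong, ``completely multiplicative'' one, which is what makes the reduction clean.) Everything else is a verbatim transcription of the proof of Theorem~1 with $H$ replaced by $f^{-1}(1)$ and cosets replaced by fibers of $f$.
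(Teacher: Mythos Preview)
Your argument is correct and is essentially identical to the paper's own proof: partition $\mathbb{Z}^+$ by the fibers of $f$, apply Theorem~4 to obtain $x,y,z\le S'(k)$ with $x+y=z$, $x\mid y$, and $f(x)=f(y)=f(z)$, then set $a=y/x$ and use (complete) multiplicativity to conclude $f(a)=f(a+1)=1$ with $a\le S'(k)=:c_0(k)$. Your remark that the argument needs complete multiplicativity (not just multiplicativity on coprime arguments) is a useful clarification that the paper leaves implicit.
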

\begin{proof}
Let $f$ be any such function. The preimages of the roots of unity
then give a partition of $\mathbb{Z}^{+}$ into at most $k$ parts.
So, there are $x,y,z\leq S'(k)$ with $f(x)=f(y)=f(z)$, fulfilling
$x+y=z$ and $x|y$.

Set $a=\frac{y}{x}\in\mathbb{Z}^{+}$. Then, dividing by $x$ gives
$a+1=\frac{z}{x}$.

The multiplicativity of $f$ shows that $f(a)=\frac{f(y)}{f(x)}=1$
and $f(a+1)=\frac{f(z)}{f(x)}=1$, as we wished.

Of course, $a\leq S'(k)=:c_{0}(k)$.\end{proof}

\end{document}